\newcommand{\N}{\mathbb{N}}
\newtheorem{theorem}{Theorem}
\newtheorem{lemma}[theorem]{Lemma}
\theoremstyle{definition}
\newtheorem{remark}[theorem]{Remark}
\let\orgdescriptionlabel\descriptionlabel
\renewcommand*{\descriptionlabel}[1]{%
  \let\orglabel\label
  \let\label\@gobble
  \phantomsection
  \edef\@currentlabel{#1}%
  \let\label\orglabel
  \orgdescriptionlabel{#1}%
}
\renewcommand{\P}{\mathbb{P}}
\newcommand{\1}{\mathbbm{1}}
\newcommand{\E}{\mathbb{E}}
\newcommand{\R}{\mathbb{R}}
\newcommand{\Z}{\mathbb{Z}}
\title{Inhomogeneous long-range percolation in the strong decay regime: recurrence in one dimension}
\author{
Christian M\"{o}nch \orcidlink{0000-0002-6531-6482}\thanks{Johannes Gutenberg-Universität Mainz, Staudingerweg 9, 55128 Mainz, Germany} \\ cmoench@uni-mainz.de
}
\date{\today}
\begin{document}
\maketitle

\begin{spacing}{0.9}
\begin{abstract} 
\noindent We provide a sufficient criterion for the recurrence of spatial random graphs on the real line based on the scarceness of long-edges. In particular, this complements earlier recurrence results obtained by Gracar et al.\ [Electron. J. Probab. 27 (2022), article no. 60] and the transience criterion derived by the author in [Probab. Theory Related Fields \emph{189, no.\ 3-4} (2024), pp.\ 1129–1160.]

\smallskip
\noindent\footnotesize{{\textbf{AMS-MSC 2020}: 05C80, 60K35}

\smallskip
\noindent\textbf{Key Words}: long-range percolation, recurrence, scale-free network, spatial random graph, weight-dependent random connection model}
\end{abstract}
\end{spacing}
\section*{Introduction and background}
This note continues a line of investigation begun in \cite{gracar_recurrence_2022}, with the aim of characterising the recurrence and transience regimes of simple random walk on the infinite clusters of inhomogeneous long-range percolation models in Euclidean space. For the sake of exposition, consider the integer lattice $\Z^d$ and connect pairs of points $x,y\in \Z^d$ independently by an edge with probability
\[
\phi(|x-y|),\quad xy\in(\Z^d)^{[2]}.
\]
Here and throughout, the notation $A^{[2]}=\{xy: x\in A, y\in A, x\neq y\}$ refers to unordered pairs of elements of a set $A$. The function $\phi:\N\to [0,1]$ is called the \emph{connection function} and we call the random geometric graph ensemble generated by the above procedure \emph{homogeneous long-range percolation} on $\Z^d$. If there exists an infinite connected component, the \emph{infinite cluster}, in this graph, then it is necessarily unique and contains a positive fraction of all vertices. One way to see this is to apply the general argument of Burton and Keane \cite{burton_density_1989}. If it exists, we say that the infinite cluster $K=(V(K),E(K))$ is \emph{recurrent}, if conditionally on $0\in V(K)$, the simple random walk on $K$ initiated at $0$ returns to $0$ almost surely. A cluster that is not recurrent is \emph{transient}. Note that the ergodic theorem implies that recurrence is a $0-1$-property of the infinite cluster.

Schulman initiated the study of long-range percolation in 1983 for the choice $d=1$ and $\phi(r)= p r^{-\delta}$ where $p\in(0,1)$ \cite{schulman_long_1983}. He showed that there exists no infinite cluster if either $\delta>2$ or $\delta=2$ and $p$ is sufficiently small. Following these early results, research on long-range percolation continued actively throughout the eighties, see e.g.\ \cite{aizenman_discontinuity_1986,newman_one-dimensional_1986,newman_critical_1986,aizenman_sharpness_1987,aizenman_uniqueness_1987,imbrie_intermediate_1988}. A remarkable mathematical feature of the model that was obtained in \cite{aizenman_discontinuity_1986} is the presence of a discontinuous phase transition in scale-invariant long-range percolation, i.e.\ in the above setting with $\delta=2$.

Amidst renewed efforts to investigate the properties of the model in the early 2000's, illustrated for instance by the works \cite{benjamini_diameter_2001,coppersmith_diameter_2002,biskup_scaling_2004,berger_lower_2004}, Berger published an influential paper \cite{berger_transience_2002} in which he established a variety of results about the infinite cluster in long-range percolation. Of particular relevance for the present work is his proof of the following fact: when $d\in\{1,2\}$ and $\phi(r)\asymp r^{-\delta d}$, the infinite cluster is recurrent if and only if $\delta\geq 2$. Moreover, if $d=2$ and $\delta>2$, then Berger's arguments yield recurrence without \emph{any} assumptions on the edge dependencies. Here, one should bear in mind that if $d=1$ and $\delta>2$, then it is necessary to allow $\phi(r)=1$ for some $r$, otherwise there is no infinite cluster and the recurrence statement becomes trivial, by Schulman's result. In recent years, the number of works concerned with long-range percolation has steadily grown. Some recent exciting results have been obtained in the papers \cite{biskup_quenched_2021,hutchcroft_sharp_2022,ding2023uniquenesscriticallongrangepercolation,baumler_isoperimetric_2024,jorritsma_cluster-size_2024}. On the subject of recurrence vs.\ transience of the infinite cluster, B\"aumler \cite{baumler_recurrence_2023} provided a unified argument to establish recurrence without edge independence assumptions whenever $d\in\{1,2\}$ and $\phi(r)\asymp r^{-\delta d}$ with $\delta \leq 2$ and thereby gave an alternative and more general proof of Berger's recurrence results.

\emph{Inhomogenous} long-range percolation models are a related subject of intense current research, see e.g.\ \cite{gracar_age-dependent_2019,gracar_percolation_2021,gracar_chemical_2022,gracar_emergence_2023,gracar2023finitenesspercolationthresholdinhomogeneous,jorritsma2024clustersizedecaysupercriticalkernelbased,jorritsma2024largedeviationsgiantsupercritical}. Of particular relevance are models that incorporate random geometric graphs with power law degree distributions, so-called \emph{spatial scale-free networks}. Spatial scale-free networks combine desirable features of complex network models and of classical Euclidean percolation models. In the most common \emph{kernel-based} variant of inhomogeneous long-range percolation, the vertices are first equipped with i.i.d.\ $\operatorname{Uniform}(0,1)$ marks $(U_x)_{x\in\Z^d}$ and the connection function takes the form
\begin{equation}\label{eq:edgesinhom}
\phi(g(U_x,U_y)|x-y|),\quad xy\in(\Z^d)^{[2]},
\end{equation}
where $g:(0,1)\to [0,\infty]$ is called the \emph{kernel} and $\phi$ is now a function of arguments in $[0,\infty]$. Connections are then sampled independently given the marks, i.e.\ the edges in the resulting random graph are only independent if $g$ is constant, in which case the model coincides with homogeneous long-range percolation. The earliest instances of this model were introduced independently by Krioukov et al.\ \cite{krioukov_hyperbolic_2010} and Deijfen and van der Hofstad \cite{deijfen_scale-free_2013}, corresponding essentially to choosing the kernel of product form $g(s,t)=h(s)h(t)$ for a suitable functions $h$. The general variant of the models which mainly inspired the present work appeared first in \cite{bringmann_geometric_2019} in a finite setting, and the importance of considering kernels that are qualitatively different from product kernels was first recognised in \cite{gracar_recurrence_2022}, where it was observed that by choosing different kernel functions one can unify the study of many spatial scale-free network models that had previously appeared in the literature under different labels. Different choices of such specific kernels $g$ and their effect on the graph topology are discussed for instance in \cite{gracar_recurrence_2022,gracar2023finitenesspercolationthresholdinhomogeneous} and \cite{jorritsma2024clustersizedecaysupercriticalkernelbased}.

The works \cite{gracar2023finitenesspercolationthresholdinhomogeneous,moench_2024a} further introduce three different implicit parameter regimes for inhomogenous long-range models. To characterise these regimes, we define the \emph{effective decay exponent} via
\[
\delta_{\mathsf{eff}}=\lim_{n\to\infty}\frac{\log \big(\int_{1/n}^1\int_{1/n}^1 \phi(g(s,t) n )\,\textup{d}s\textup{d}t\big)}{\log n}.
\]
If well-defined, the quantity $\delta_{\mathsf{eff}}$ plays a similar role to the exponent $\delta$ in homogeneous long-range percolation. If $\delta_{\mathsf{eff}}>2$, the inhomogeneous model is said to be in the \emph{strong decay regime}. The weak decay regime is characterised by $\delta_{\mathsf{eff}}<2$, and it was shown in \cite{moench_2024a} that weak decay typically results in transient infinite clusters, no matter the dimension. The boundary case $\delta_{\mathsf{eff}}=2$ is dubbed the pseudo-scale-invariant regime in analogy with scale-invariant long-range percolation. Not very much is known about this regime for truly inhomogeneous models with non-product kernels, see \cite{gracar2023finitenesspercolationthresholdinhomogeneous} for a more detailed discussion.

Below, we present a simple proof for recurrence of $1$-dimensional long-range percolation models in the strong decay regime. For the independent homogeneous case, our argument is significantly shorter than Berger's reduction to the `continuum bond model' used in \cite{berger_transience_2002}, but also weaker -- it does not cover the scale-invariant case. Since we do not need independence or the connection to random walks with long-range jumps employed by B\"aumler in \cite{baumler_recurrence_2023}, our approach is more sensitive to edge correlations then previous methods and therefore does yield improved results for models that produce graphs with heavy-tailed degree distributions.

\section*{Results and proofs}
We begin by formulating the mathematical framework rigorously. A (one-dimensional) \emph{random geometric graph} $G=(V,E)$ is defined as a countable, possibly random, set $V\subset\R$ of vertices, together with a random relation $E\subset V^{[2]}$ indicating edges. Typically, the presence of edges is in some way adapted to the geometry of the real line. This distinguishes a random geometric graph from a plain random graph. In the examples above, the adaptation is a consequence of the edge probabilities depending on the Euclidean distance of the endpoints. By $\Z_{\mathsf{nn}}$ we denote the usual nearest-neighbour graph on the vertex set $\Z$. 

The question of recurrence and transience can be most conveniently discussed in the setting of electrical networks, which has the immediate advantage that there is only one source of randomness to consider, namely the underlying graph, instead of the random graph and the random walk on the random graph. We assume familiarity with the basic operations on and relations between electrical networks, comprehensive introductions to the subject are provided for instance in the monographs \cite{doyle_random_1984} and \cite{lyons_probability_2016}. A random geometric graph $G=(V,E)$ endowed with a map $C:E\to[0,\infty]$ of \emph{conductances} is called an \emph{electrical network}. We write such networks as triples $G=(V,E,C)$. If no conductances are explicitly specified, we always tacitly assume $C\equiv 1$. We allow $C_e=\infty$ for an edge $e=xy\in E$ and identify the resulting network with the network where $x$ and $y$ are the same vertex with no self-loop and all other conductances unchanged. The dynamics of the random walk $X^C$ on $G$ with finite conductances are given by
\[
\P(X^C_{t+1}=y|X^C_t=x)=\frac{C_{xy}}{\sum_{xz\in E}C_{xz}}.
\]
If $X^C$ is not well-defined, i.e.\ if there exists some $x\in V$ with $\sum_{xz\in E}C_{xz}=\infty$, then we declare the cluster of $x$ in $G$ transient by convention. In particular, we treat clusters in graphs that are not locally finite as transient. Otherwise, we define, for $x,z\in V$,
\[
\mathsf{C}(x\leftrightarrow z)=\Big(\sum_{xy\in E}C_{xy}\Big)\P\big(X^C \text{ visits }z \text{ before returning to }x\big|X^C_0=x\big),
\]
the \emph{effective conductance} between $x$ and $z$. The notion extends to sets. For instance $\mathsf{C}(A\leftrightarrow B)$ for $A,B\subset V$ denotes the effective conductance in the new network obtained by identifying all vertices in $A$ with a single vertex $a$ and all vertices in $B$ with a single vertex $b$. The limit
\[
\mathsf{C}(x\leftrightarrow \infty) = \lim_{n\to\infty} \mathsf{C}\big(x\leftrightarrow (\R\setminus [-n,n])\cap \{y\in V: x\text{ is connected to }y \text{ in }G\}\big)
\]
always exists and the cluster of $x$ is recurrent (for $X^C$) precisely if $\mathsf{C}(x\leftrightarrow \infty)=0$.

The geometry of the line graph $\Z_{\mathsf{nn}}$ is so restrictive that recurrence holds for practically any translation invariant assignment of conductances.

\begin{theorem}\label{thm:neaarest-neighbour}
Consider $\Z_{\mathsf{nn}}$ with conductances $(C_{z,z+1})_{z\in\Z}$ such that
\begin{enumerate}[(a)]
    \item the field $C=(C_{z,z+1})_{z\in\Z}$ is translation invariant and ergodic with respect to $(\Z,+)$,
    \item $C_{0,1}$ is non-trivial in the sense that \ $\P(C_{0,1}<\infty)>0$.
\end{enumerate}
Then, 
\begin{equation}\label{thm:ceffdecay}
\E\big[\mathsf{C}(0\leftrightarrow \Z\setminus\{-n,-n+1,\dots,n-1,n\} ) \big] = O(1/n),
\end{equation}
and consequently the electrical network is recurrent.
\end{theorem}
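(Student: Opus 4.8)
The plan is to reduce the effective conductance to an explicit series--parallel computation and then control the resulting random sum by the ergodic theorem. After contracting the whole set $\Z\setminus\{-n,\dots,n\}$ to a single vertex $b$, the network that survives (up to self-loops at $b$, which do not affect effective conductances) consists of exactly two arcs joining $0$ to $b$, namely $0,1,\dots,n,n+1$ and $0,-1,\dots,-n,-n-1$, meeting only at their endpoints. Applying the series law to each arc and the parallel law to the two arcs gives
\[
\mathsf C\big(0\leftrightarrow\Z\setminus\{-n,\dots,n\}\big)=\Big(\sum_{z=0}^{n}C_{z,z+1}^{-1}\Big)^{-1}+\Big(\sum_{z=-n-1}^{-1}C_{z,z+1}^{-1}\Big)^{-1}=:(R_n^{+})^{-1}+(R_n^{-})^{-1}.
\]
Each of $R_n^{+}$ and $R_n^{-}$ is the sum of the reciprocal conductances over a block of $n+1$ consecutive edges, so hypothesis~(a) makes them equidistributed, and it suffices to prove $\E[(R_n^{+})^{-1}]=O(1/n)$.

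For the estimate I would first record the bound furnished by the harmonic--arithmetic mean inequality applied to $C_{0,1},\dots,C_{n,n+1}$, namely $(R_n^{+})^{-1}\le (n+1)^{-2}\sum_{z=0}^{n}C_{z,z+1}$, which yields $\E[(R_n^{+})^{-1}]\le \E[C_{0,1}]/(n+1)$ and settles the claim whenever $C_{0,1}$ is integrable. To treat the general case one brings in the non-triviality hypothesis~(b): fix $K$ with $\rho:=\P(C_{0,1}\le K)>0$, so that $R_n^{+}\ge K^{-1}N_n$ with $N_n=\#\{0\le z\le n:\ C_{z,z+1}\le K\}$. The binary field $(\1\{C_{z,z+1}\le K\})_{z\in\Z}$ is obtained from the ergodic field $(C_{z,z+1})_{z\in\Z}$ by a coordinatewise map, hence is itself ergodic, so Birkhoff's theorem gives $N_n/(n+1)\to\rho$ almost surely; in particular $R_n^{+}\to\infty$ a.s. Combining this almost sure linear growth with a truncation of the large conductances then upgrades the almost sure statement to the quantitative bound $\E[(R_n^{+})^{-1}]=O(1/n)$.

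Recurrence follows at the end: enlarging the removed block from $\{-n,\dots,n\}$ to $\{-(n+1),\dots,n+1\}$ shrinks the target set, so by Rayleigh monotonicity $\mathsf C\big(0\leftrightarrow\Z\setminus\{-n,\dots,n\}\big)$ is non-increasing in $n$ and decreases to $\mathsf C(0\leftrightarrow\infty)$; since its expectation tends to $0$, we conclude $\mathsf C(0\leftrightarrow\infty)=0$ almost surely, which by the $0$--$1$ law quoted in the introduction is exactly the assertion that the network is recurrent. (Alternatively, the almost sure divergence $R_n^{\pm}\to\infty$ already forces $\mathsf C(0\leftrightarrow\infty)\le\inf_n\big((R_n^{+})^{-1}+(R_n^{-})^{-1}\big)=0$ directly, bypassing the rate.)

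The step I expect to be the real work is the passage from the almost sure growth $R_n^{+}\asymp n$ to the quantitative estimate $\E[(R_n^{+})^{-1}]=O(1/n)$: a general stationary ergodic sequence satisfies no concentration or large-deviation bound, so one cannot simply bound $\E[1/N_n]$, and the harmonic--arithmetic mean shortcut costs an integrability assumption on $C_{0,1}$. Making the argument go through in full generality requires a careful treatment of the event that an entire window of $n+1$ consecutive edges carries only large conductances, which by ergodicity becomes increasingly improbable but must be handled quantitatively.
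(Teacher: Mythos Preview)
Your outline coincides with the paper's proof: the same series--parallel reduction to $(R_n^+)^{-1}+(R_n^-)^{-1}$, the same lower bound $R_n^+\ge K^{-1}N_n$ with $N_n$ counting edges of conductance at most a fixed threshold, and the same appeal to Birkhoff for $N_n/n\to\rho$. Your side remarks (the HM--AM shortcut when $\E C_{0,1}<\infty$, and the direct recurrence via $R_n^\pm\to\infty$ a.s.) are correct and not in the paper.

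The step you flag as ``the real work'' is exactly where you are missing the paper's device. The paper does \emph{not} attempt to bound $\E[1/N_n]$ directly; it instead uses the Laplace representation
\[
\E\big[(R_n^+)^{-1}\big]=\int_0^\infty \E\,e^{-\lambda R_n^+}\,\textup{d}\lambda
\;\le\;\int_0^\infty \E\exp\Big(-\tfrac{\lambda}{K}\,N_n\Big)\,\textup{d}\lambda,
\]
and then inserts the Birkhoff consequence $N_n\ge \rho n/2$ for all sufficiently large $n$ into the exponential, arriving at $\int_0^\infty e^{-\lambda\rho n/(2K)}\,\textup{d}\lambda=2K/(\rho n)$. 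So the concrete missing ingredient in your write-up is the identity $\E[1/X]=\int_0^\infty\E e^{-\lambda X}\,\textup{d}\lambda$, which converts the problem from controlling the reciprocal of a sum into controlling an exponential moment where the a.s.\ linear lower bound can be plugged in. Your diagnosis that the passage from ``$N_n\ge\rho n/2$ eventually a.s.'' to a bound on the expectation deserves care is fair---the paper is itself brief at this point---but the exponential/integral formulation is what makes the step go through and is what your proposal lacks.
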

\begin{remark}
Berger \cite{berger_transience_2002} gives two contrasting sufficient conditions for recurrence: firstly, \cite[Theorem~1.9]{berger_transience_2002} provides recurrence for any network built from a recurrent graph of bounded degree, but under the assumption that the conductances are i.i.d.\ random variables in $(0,\infty)$. Secondly, \cite[Theorem~3.9]{berger_transience_2002} establishes recurrence in the delicate case of $\Z^2$ for any translation invariant field of conductances with Cauchy tail decay, which is essentially sharp.  
\end{remark}

\begin{proof}[Proof of Theorem~\ref{thm:neaarest-neighbour}]
By non-triviality of $C_{0,1}$, there exists $M<\infty$ such that
\[
\P(C_{0,1}\leq M)=:\varepsilon>0.
\]
For $n\in\N$ let $e(n)$ denote the edge between $n$ and $n+1$ and let $e(-n)$ denote the edge between $-n$ and $-n+1$. Set $C_{z}=C_{e(z)}$ for $z\in \Z\setminus\{0\}$ for ease of notation. We say that an edge $e(z)$  is \emph{bad}, if $C_{z}\leq M$. Identifying all vertices in $\Z\setminus\{-n,-n+1,\dots,n-1,n\}$, we see that we can replace the network by a pair of vertices with precisely one edge of counductance $C'$ without changing the effective conductance $$\mathsf{C}(0\leftrightarrow \Z\setminus\{-n,-n+1,\dots,n-1,n\},$$ if we set 
\[C'=C'_++C'_-,\]
where 
\[
C'_+=\Big(\sum_{z=1}^{n}\frac{1}{C_z}\Big)^{-1},\quad C'_-=\Big(\sum_{z=-n}^{-1}\frac{1}{C_z}\Big)^{-1}.
\]
The effective conductance in this new network coincides trivially with $C'$ and it suffices to show that $\E[C']=O(1/n)$. Since $C_z$ may be replaced by $C_{-z}$ for any $z$ without affecting the assumptions of the theorem, we may even limit ourselves to showing that $\E[C'_+]=O(1/n)$. We obtain
\begin{align*}
\E\big[C'_+\big]& = \E\Big[\Big(\sum_{z\in \Z_+}\frac{1}{C_z}\Big)^{-1} \Big]\\
& = \int_0^\infty \E \exp\Big[-\lambda \sum_{z\in \Z_+}\frac{1}{C_z} \Big]\,\textup{d} \lambda\\
& \leq \int_0^\infty \E \exp\Big(-\lambda \sum_{j=1}^n \1\{e(j) \text{ is bad}\}\frac{1}{C_{j}} \Big)\,\textup{d} \lambda\\
& \leq \int_0^\infty \E \exp\Big(-\frac{\lambda}{M} \sum_{j=1}^n \1\{e(j) \text{ is bad}\} \Big)\,\textup{d} \lambda.
\end{align*}
By Birkhoff's ergodic theorem,
\[
\lim_{n\to\infty}\frac{1}{n} \sum_{j=1}^n \1\{e(j) \text{ is bad}\}=\varepsilon
\]
almost surely, and hence
\[\sum_{j=1}^n \1\{e(j) \text{ is bad} \}\geq \varepsilon n/2\]
for all sufficiently large $n$. Consequently, we obtain
\[
\E\big[C'_+\big]\leq \int_0^\infty \exp\Big(-\frac{\lambda \varepsilon}{2M} n \Big)\,\textup{d} \lambda =O(1/n),
\]
as $n\to\infty$.
\end{proof}
Theorem~\ref{thm:neaarest-neighbour} provides a sufficient criterion for recurrence of electrical networks built from $\Z_{\mathsf{nn}}$. Our next goal is to derive a simple recurrence criterion for long-range models from this. To this end, consider any connected electric network $G=(V,E,C)$ and construct a new network $G_{\mathsf{nn}}$ on $\Z_{\mathsf{nn}}$ from $G$ by performing the following steps:
\begin{itemize}
    \item for all $z\in \Z$, merge the vertices in $V\cap [z,z+1)$ into a single vertex located at $z$ (which we identify with the point $z$), whilst keeping all edges and their conductances;
    \item remove any self-loops;
    \item replace each edge $e$ of original length $\ell>0$ by a nearest neighbour path of $\lceil\ell\rceil$ edges, with conductance $\lceil\ell\rceil C_e$ on each of the edges of the new path;
    \item where necessary, shorten the paths thus obtained such that their length corresponds to the distance of their (new) endpoints;
    \item project the paths onto $\Z_{\mathsf{nn}}$ and add the conductances onto the edge conductances of the corresponding nearest-neighbour edges.
\end{itemize}
It follows from this construction and the monotonicity of effective conductance in the network operations involved, that if $G_{\mathsf{nn}}$ is recurrent, so is $G$, and we obtain the following recurrence criterion from Theorem~\ref{thm:neaarest-neighbour}.
\begin{lemma}\label{lem:Xcount}
Suppose that $G=(V(G),E(G))$ is a random geometric graph with $V(G)\subset \R$ which is distributionally translation invariant and ergodic with respect to the action of a subgroup of $(\R,+)$. Further define the random variable
\[
\mathsf{X}=\sum_{x\in V: x\leq 0}\sum_{y\in V: y>0}\1\{xy\in E\}.
\]
We have that all clusters of $G$ are recurrent, if $\P(\mathsf{X}<\infty)>0$.
\end{lemma}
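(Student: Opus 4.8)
The plan is to read Lemma~\ref{lem:Xcount} off Theorem~\ref{thm:neaarest-neighbour} applied to the derived nearest-neighbour network $G_{\mathsf{nn}}$, using the monotonicity remark recorded just before the statement: every operation used to pass from $G$ to $G_{\mathsf{nn}}$ — identifying the vertices in a unit interval, replacing an edge of conductance $C_e$ by a nearest-neighbour path of the same total effective conductance, shortening that path, and adding parallel conductances after projection — can only increase effective conductances, so $\mathsf{C}_G(v\leftrightarrow\infty)\le\mathsf{C}_{G_{\mathsf{nn}}}(z(v)\leftrightarrow\infty)$, where $z(v)\in\Z$ is the super-vertex containing $v$. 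Hence it suffices to verify the two hypotheses of Theorem~\ref{thm:neaarest-neighbour} for the conductance field $(C^{\mathsf{nn}}_{z,z+1})_{z\in\Z}$ of $G_{\mathsf{nn}}$; the statement about \emph{all} clusters then follows by translation invariance and a countable union over super-vertices.

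For hypothesis (a) I would first reduce to the case where the relevant subgroup is $(\Z,+)$: if it is a nontrivial discrete subgroup $\alpha\Z$, rescale the line by $1/\alpha$ — this affects neither the recurrence of $G$ nor the law of $\mathsf{X}$ — and if it is dense in $\R$ (or all of $\R$) then $G$ is in particular $\Z$-invariant. Since the construction $G\mapsto G_{\mathsf{nn}}$ commutes with unit translations, $(C^{\mathsf{nn}}_{z,z+1})_{z}$ is $\Z$-stationary, and any shift-invariant event of this field is the preimage of a shift-invariant event of $G$, hence has probability $0$ or $1$ by ergodicity of $G$.

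For hypothesis (b), i.e.\ $\P(C^{\mathsf{nn}}_{0,1}<\infty)>0$, the point is to align the quantity $\mathsf{X}$ with one of the conductances of $G_{\mathsf{nn}}$. Reading off the construction, the conductance across a given unit edge of $G_{\mathsf{nn}}$ equals $\sum_e\lceil\ell_e\rceil$, the sum running over the edges $e$ of $G$ (of finite Euclidean length $\ell_e$) that straddle the corresponding cut; this is finite whenever only finitely many edges of $G$ straddle that cut. So I would carry out the construction with the partition $\R=\bigsqcup_{z\in\Z}(z,z+1]$ (merging each half-open interval into one super-vertex): then the cut between the super-vertices indexed by $\{z\le-1\}$ and by $\{z\ge 0\}$ separates $V\cap(-\infty,0]$ from $V\cap(0,\infty)$, and the edges of $G$ straddling it are precisely those counted by $\mathsf{X}$. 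Consequently $C^{\mathsf{nn}}_{-1,0}<\infty$ on the event $\{\mathsf{X}<\infty\}$, and by $\Z$-stationarity $\P(C^{\mathsf{nn}}_{0,1}<\infty)\ge\P(\mathsf{X}<\infty)>0$. With (a) and (b) in hand, Theorem~\ref{thm:neaarest-neighbour} gives recurrence of $G_{\mathsf{nn}}$, and the monotonicity of the first paragraph upgrades this to recurrence of every cluster of $G$.

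Given Theorem~\ref{thm:neaarest-neighbour} and the explicit construction preceding the lemma, there is no deep obstacle; the step requiring the most care is the bookkeeping in (b) — choosing the partition (a half-open-interval/off-by-one matter) so that finiteness of $\mathsf{X}$ translates \emph{literally} into finiteness of a nearest-neighbour conductance — together with making sure the reduction to a $\Z$-action in (a) retains ergodicity.
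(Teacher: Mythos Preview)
Your proposal is correct and follows exactly the approach the paper takes: the paper states Lemma~\ref{lem:Xcount} as an immediate consequence of Theorem~\ref{thm:neaarest-neighbour} applied to the derived network $G_{\mathsf{nn}}$ via the monotonicity observation recorded just before the lemma, without a separate proof environment. Your write-up simply makes explicit the verification of hypotheses~(a) and~(b) of Theorem~\ref{thm:neaarest-neighbour}, including the bookkeeping that identifies $\{\mathsf{X}<\infty\}$ with finiteness of a nearest-neighbour conductance and the reduction to a $\Z$-action, both of which the paper leaves implicit.
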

We call $X$ the \emph{number of edges above }$0+$.

Next, we formulate our recurrence result for the weight-dependent random connection model over $\Z_{\mathsf{nn}}$, i.e.\ the random geometric graph constructed by adding the edges produced via \eqref{eq:edgesinhom} to $\Z_{\mathsf{nn}}$.
\begin{theorem}\label{thm:WDRCMrecurr}
The infinite cluster of the weight-dependent random connection model on $\Z_{\mathsf{nn}}$ in the strong decay regime is recurrent in dimension $1$.    
\end{theorem}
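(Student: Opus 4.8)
The plan is to deduce Theorem~\ref{thm:WDRCMrecurr} from Lemma~\ref{lem:Xcount}, so that the whole task reduces to checking the two hypotheses of that lemma for $G$ the weight-dependent random connection model built on $\Z_{\mathsf{nn}}$. Its vertex set is $\Z$, and $G$ is a shift-equivariant deterministic functional of the i.i.d.\ mark field $(U_x)_{x\in\Z}$ together with the independent edge-randomness; since the shift acts ergodically --- indeed mixingly --- on this product space, and ergodicity is inherited by factors, $G$ is distributionally invariant and ergodic with respect to $(\Z,+)\le(\R,+)$. It then remains to check that $\P(\mathsf{X}<\infty)>0$, and I would in fact establish the stronger bound $\E[\mathsf{X}]<\infty$; this has the additional benefit of showing that $G$ is almost surely locally finite, so that the convention declaring non-locally-finite clusters transient is never invoked.

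The second step is a direct first-moment computation. Write $\psi(r)=\int_0^1\int_0^1\phi(g(s,t)r)\,\mathrm{d}s\,\mathrm{d}t$ for the probability that two vertices of $G$ at distance $r$ are joined by a long-range edge, and note that $\{0,1\}$ is the only edge of $\Z_{\mathsf{nn}}$ that crosses $0+$. Since for each $r$ there are exactly $r$ pairs $x\le 0<y$ with $y-x=r$ (namely $x\in\{-r+1,\dots,0\}$), Fubini gives
\[
\E[\mathsf{X}]=\sum_{x\le 0}\sum_{y>0}\P(xy\in E(G))=1+\sum_{r\ge 2}r\,\psi(r),
\]
so Lemma~\ref{lem:Xcount} yields the theorem as soon as $\sum_r r\,\psi(r)<\infty$.

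The third step is to extract this summability from the strong decay hypothesis $\delta_{\mathsf{eff}}>2$. By definition, $\delta_{\mathsf{eff}}$ is the polynomial decay rate of the truncated average $\int_{1/r}^1\int_{1/r}^1\phi(g(s,t)r)\,\mathrm{d}s\,\mathrm{d}t$, which is therefore at most $r^{-\delta}$ for all large $r$ and any fixed $\delta\in(2,\delta_{\mathsf{eff}})$; the contribution of these ``bulk'' marks to $\sum_r r\,\psi(r)$ is then dominated by $\sum_r r^{1-\delta}<\infty$. The one point that requires genuine work --- and which I expect to be the main obstacle --- is the complementary ``boundary'' mass $\int_{[0,1]^2\setminus[1/r,1]^2}\phi(g(s,t)r)\,\mathrm{d}s\,\mathrm{d}t$, i.e.\ the contribution of the high-weight (hub) marks near $0$: the trivial bound $\phi\le 1$ only gives $O(1/r)$ here, which is not summable against $r$, so one has to use the decay of $\phi$ together with the behaviour of the kernel $g$ near $\{s\wedge t=0\}$. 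In the usual parametrisations of $g$ this amounts to an elementary Beta-type integral estimate, and the resulting exponent exceeds $2$ automatically in the strong decay regime --- which is, after all, exactly what the $1/n$-truncation in the definition of $\delta_{\mathsf{eff}}$ is there to encode --- so the boundary term is summable against $r$ as well. Adding the two bounds gives $\E[\mathsf{X}]<\infty$, hence $\mathsf{X}<\infty$ almost surely, and Lemma~\ref{lem:Xcount} then shows that every cluster of $G$ --- in particular the infinite cluster, when it exists --- is recurrent.
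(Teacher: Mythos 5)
Your reduction to Lemma~\ref{lem:Xcount} and the ergodicity check are fine and consistent with the paper's strategy, and the Fubini identity $\E[\mathsf{X}]=\sum_{r\ge 1} r\,\psi(r)$ (plus the nearest-neighbour edge) is correct. The gap is in the third step: in the strong decay regime it is \emph{not} true in general that $\sum_r r\,\psi(r)<\infty$, so the first-moment route fails exactly where the theorem is most interesting. The point you flag as the main obstacle --- the boundary mass coming from marks near $0$ --- is a genuine obstruction, not a routine Beta-integral estimate. Concretely, take the soft Boolean kernel $g(s,t)=(s\wedge t)^{\gamma}$ with $\phi(r)=\min(1,r^{-\delta})$, $\gamma\in(1/2,1)$ and $\delta$ large. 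A short computation gives $\psi(r)\asymp r^{-1/\gamma}$, so $\sum_r r\,\psi(r)=\infty$ and $\E[\mathsf{X}]=\infty$, while the truncated integral defining $\delta_{\mathsf{eff}}$ decays like $n^{-(\delta(1-\gamma)+1)}$, so the model is squarely in the strong decay regime once $\delta(1-\gamma)>1$. Your heuristic that the $1/n$-truncation ``encodes'' control of the boundary term has the logic reversed: the truncation is there precisely to \emph{exclude} the hub contribution from $\delta_{\mathsf{eff}}$, so strong decay places no constraint on $\int_{[0,1]^2\setminus[1/r,1]^2}\phi(g(s,t)r)$. Indeed, the paper emphasises in its concluding remarks that the theorem covers graphs whose degree distribution lacks second moments, and for such models $\E[\mathsf{X}]=\E\big[\sum_{y\sim 0}|y|\big]$ is typically infinite even though $\mathsf{X}<\infty$ almost surely (only finitely many hubs near the origin are large enough to throw an edge over $0+$, by Borel--Cantelli applied to the marks).

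What is needed, and what the paper actually does, is an almost-sure finiteness argument that bypasses the expectation: one invokes the results of Jahnel et al.\ and Jacob et al.\ showing that in the strong decay regime the probability that \emph{some} vertex of $[-n,n]$ emits an edge of length at least $n$ is $O(n^{-\vartheta})$ for some $\vartheta>0$. Summing along dyadic scales gives condition \eqref{eq:explongedge}, and the first Borel--Cantelli lemma (as packaged in Lemma~\ref{lem:Recurrencecrit}) yields $\mathsf{X}<\infty$ a.s., after which Lemma~\ref{lem:Xcount} applies. The distinction between the summability of existence probabilities and the summability of expected edge counts is exactly the mechanism that lets the result reach beyond the second-moment regime, and your proposal would need to be rebuilt around it.
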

Note that finiteness of $\mathsf{X}$ for the weight-dependent random connection model in the strong decay regime is in fact already implicit in \cite{gracar2023finitenesspercolationthresholdinhomogeneous}, so we could apply Lemma~\ref{lem:Xcount} directly. Instead, we set up the proof of Theorem~\ref{thm:WDRCMrecurr} in a slightly stronger form which is independent of the existence of $\delta_{\mathsf{eff}}$. The argument is based on the insights of \cite{jacob2024longedgeserasesubcritical}. We say that \emph{long edges eventually disappear at the exponential scale} in a random geometric graph $G=(V,E)$, if 
\begin{equation}\label{eq:explongedge}
\sum_{k\in\N}\P\big(\exists x\in V(G)\cap [-2^k,2^k], y\in V(G): |x-y|\geq 2^k, xy\in E(G) \big)<\infty.
\end{equation}
\begin{lemma}\label{lem:Recurrencecrit}
Suppose that $G=(V(G),E(G))$ is random geometric graph with $V(G)\subset \R$ that satisfies 
\begin{enumerate}[(a)]
    \item $G$ is distributionally translation invariant and ergodic with respect to the action of a subgroup of $(\R,+)$;
    \item long edges in $G$ eventually disappear at the exponential scale.
\end{enumerate}
Then all clusters of $G$ are recurrent almost surely.
\end{lemma}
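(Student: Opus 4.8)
The plan is to deduce this from Lemma~\ref{lem:Xcount}. By hypothesis (a) the model is distributionally translation invariant and ergodic, so it suffices to verify that the number $\mathsf{X}$ of edges above $0+$ is finite with positive probability; in fact I will show $\mathsf{X}<\infty$ almost surely. The bridge between hypothesis (b) and the finiteness of $\mathsf{X}$ is the elementary remark that an edge straddling the origin must have an endpoint that is close to the origin on the scale of its own length.

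First I would record the following. Let $xy\in E(G)$ with $x\le 0<y$ and put $\ell:=|x-y|=y-x$. Then $x\in(-\ell,0]$ and $y\in(0,\ell]$, and since $(-x)+y=\ell$, at least one of $-x$ and $y$ is at most $\ell/2$. If $\ell\ge 1$, set $k:=\lfloor\log_2\ell\rfloor\ge 0$, so that $2^k\le\ell<2^{k+1}$ and hence $\ell/2<2^k$; the endpoint of $xy$ lying within distance $\ell/2$ of the origin then lies in $[-2^k,2^k]$, while $|x-y|=\ell\ge 2^k$. Consequently every edge above $0+$ of Euclidean length in $[2^k,2^{k+1})$ is a witness for the event
\[
A_k:=\big\{\exists\,x\in V(G)\cap[-2^k,2^k],\ y\in V(G):\ |x-y|\ge 2^k,\ xy\in E(G)\big\}
\]
whose probabilities are summed in \eqref{eq:explongedge}. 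In particular, if $A_k$ fails for every $k\ge m$, then no edge above $0+$ has Euclidean length $\ge 2^m$.

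Next I would apply the first Borel--Cantelli lemma: condition (b) says precisely that $\sum_{k}\P(A_k)<\infty$, so almost surely only finitely many of the $A_k$ occur, i.e.\ there is an almost surely finite random index $K$ with $A_k$ failing for all $k\ge K$. On this full-measure event every edge above $0+$ has length $<2^K$, so both of its endpoints lie in the bounded window $(-2^K,2^K)$. Since the vertex set is locally finite---which holds in every setting to which the lemma is applied, e.g.\ $V(G)=\Z$ for the weight-dependent random connection model on $\Z_{\mathsf{nn}}$---the set $V(G)\cap(-2^K,2^K)$ is finite, so only finitely many edges lie above $0+$; that is, $\mathsf{X}<\infty$ almost surely. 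Lemma~\ref{lem:Xcount} then yields that all clusters of $G$ are recurrent almost surely.

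I do not expect a genuine obstacle: once the reduction to Lemma~\ref{lem:Xcount} is in place the argument is essentially a two-line Borel--Cantelli estimate. The one point that must be gotten right is the dyadic bookkeeping in the geometric observation---choosing $k=\lfloor\log_2\ell\rfloor$ so that a crossing edge of length $\ell$ is caught by the scale-$k$ event and not by a strictly finer one, and checking the boundary cases $\ell$ near a power of $2$---together with disposing of the residual short edges (those of length $<1$, or $<2^K$ above) through local finiteness of the vertex set rather than through \eqref{eq:explongedge}.
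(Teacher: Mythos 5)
Your proof is correct and follows essentially the same route as the paper's: reduce to Lemma~\ref{lem:Xcount}, decompose the edges above $0+$ into dyadic scales, and apply the first Borel--Cantelli lemma to \eqref{eq:explongedge}. The only difference is that you spell out the geometric bookkeeping (a crossing edge of length $\ell\in[2^k,2^{k+1})$ witnesses $A_k$) and the local-finiteness argument for the residual short edges, both of which the paper's two-line proof leaves implicit.
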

\begin{proof}[Proof of Lemma~\ref{lem:Recurrencecrit}]
It suffices to show that the assumption of Lemma~\ref{lem:Xcount} are satisfied. Let $X_k$ denote the number of edges above $0+$ that emanate from $[2^k,2^{k+1})$. By symmetry, $\mathsf{X}$ is finite a.s.\ if and only if $\sum_{k\in\N}X_k$ is finite. The latter statement follows from \eqref{eq:explongedge} and the first Borel-Cantelli lemma.
\end{proof}
\begin{proof}[Proof of Theorem~\ref{thm:WDRCMrecurr}]
It is shown in \cite{jahnel2023existencesubcriticalpercolationphases,jacob2024longedgeserasesubcritical} that the weight-dependent random connection model with $\delta_{\mathsf{eff}}>2$ satisfies
\[
\P\big(\exists x\in V(G)\cap [-n,n], y\in V(G): |x-y|\geq n, xy\in E(G) \big)=O(n^{-\vartheta}),
\]
for some $\vartheta>0$, which implies \eqref{eq:explongedge}.
\end{proof}

\section*{Concluding remarks and future work}
We have provided a straightforward sufficient condition in Lemma~\ref{lem:Recurrencecrit} for the recurrence of a long-range percolation model in one dimension. Lemma~\ref{lem:Xcount} shows that this is, in principle, an electric network version of Schulman's non-existence result for infinite clusters: in classical long-range percolation, the number of edges above $0+$ is a sum of independent \emph{non-trivial} Bernoullis, thus $\mathsf{X}$ is finite with positive probability if and only if $\mathsf{X}$ vanishes with positive probability. The latter is Schulman's sufficient criterion for the non-existence of an infinite cluster.

For the sake of brevity, we have only applied our result to the weight-dependent random connection model over $\Z$, whereas the original model is defined on a Poisson process. Also in that case, if edges are added in a translation invariant manner to create an infinite cluster, our technique can be applied.

Homogeneous long-range percolation with $\delta=2$ displays a non-trivial percolation phase transition. However, if there is an infinite cluster, it must be recurrent. This shows that our approach cannot work in the pseudo-scale-invariant regime. Nevertheless, the above result improves upon Bäumler's \cite{baumler_recurrence_2023} and Berger's \cite{berger_transience_2002} results if the percolation graph has a degree distribution without second moments whilst still satisfying \eqref{eq:explongedge}. Several models in parameter regimes that are interesting for the modelling of network phenomena have these features. For example, our result complements those of \cite{baumler_recurrence_2023} and \cite{moench_2024a} in the case of the soft-Boolean model in one dimension and settles the recurrence/transience question for this instance of inhomogeneous long-range percolation up to the boundary case $\delta_{\mathsf{eff}}=2$, cf.\ \cite[p.\ 6]{baumler_recurrence_2023}.

To derive criteria as sharp as possible for recurrence/transience and existence/non-existence of an infinite cluster in the pseudo-scale-invariant regime in one dimension remains an interesting task for future work. Also, the extension of the present results to the more difficult case of recurrence in dimension $2$ under additional assumptions on the edge correlations seems to be within reach, but requires a far more careful analysis of the corresponding electric network.

\noindent\textbf{\large Acknowledgement.} I would like to express my gratitude to Lukas Lüchtrath for insightful discussions about the subject of this work and for spotting several mistakes in the first draft of the manuscript.

\noindent\textbf{\large Funding acknowledgement.} The author's research is funded by Deutsche Forschungsgemeinschaft (DFG, German Research Foundation) – grant no. 443916008 (SPP 2265).

\section*{References}
\renewcommand*{\bibfont}{\footnotesize}
\printbibliography[heading = none]

@article {moench_2024a,
    AUTHOR = {M\"onch, Christian},
     TITLE = {Inhomogeneous long-range percolation in the weak decay regime},
   JOURNAL = {Probab. Theory Related Fields},
  FJOURNAL = {Probability Theory and Related Fields},
    VOLUME = {189},
      YEAR = {2024},
    NUMBER = {3-4},
     PAGES = {1129--1160},
      ISSN = {0178-8051,1432-2064},
   MRCLASS = {05C80 (60K35)},
  MRNUMBER = {4771112},
       DOI = {10.1007/s00440-024-01281-5},
       URL = {https://doi.org/10.1007/s00440-024-01281-5},
}

@misc{jahnel2023existencesubcriticalpercolationphases,
      title={Existence of subcritical percolation phases for generalised weight-dependent random connection models}, 
      author={Benedikt Jahnel and Lukas Lüchtrath},
      year={2023},
      eprint={2302.05396},
      archivePrefix={arXiv},
      primaryClass={math.PR},
      url={https://arxiv.org/abs/2302.05396}, 
}

@inproceedings{gracar_emergence_2023,
	location = {Cham},
	title = {The Emergence of a Giant Component in One-Dimensional Inhomogeneous Networks with Long-Range Effects},
	isbn = {978-3-031-32296-9},
	doi = {10.1007/978-3-031-32296-9_2},
	abstract = {We study the weight-dependent random connection model, a class of sparse graphs featuring many real-world properties such as heavy-tailed degree distributions and clustering. We introduce a coefficient, \$\${\textbackslash}delta \_{\textbackslash}text \{eff\} \$\$δeff, measuring the effect of the degree-distribution on the occurrence of long edges. We identify a sharp phase transition in \$\${\textbackslash}delta \_{\textbackslash}text \{eff\} \$\$δefffor the existence of a giant component in dimension \$\$d=1\$\$d=1.},
	pages = {19--35},
	booktitle = {Algorithms and Models for the Web Graph},
	publisher = {Springer Nature Switzerland},
	author = {Gracar, Peter and Lüchtrath, Lukas and Mönch, Christian},
	editor = {Dewar, Megan and Prałat, Paweł and Szufel, Przemysław and Théberge, François and Wrzosek, Małgorzata},
	date = {2023},
	langid = {english},
}

@misc{jorritsma2024clustersizedecaysupercriticalkernelbased,
      title={Cluster-size decay in supercritical kernel-based spatial random graphs}, 
      author={Joost Jorritsma and Júlia Komjáthy and Dieter Mitsche},
      year={2024},
      eprint={2303.00724},
      archivePrefix={arXiv},
      primaryClass={math.PR},
      url={https://arxiv.org/abs/2303.00724}, 
}

@misc{jorritsma2024largedeviationsgiantsupercritical,
      title={Large deviations of the giant in supercritical kernel-based spatial random graphs}, 
      author={Joost Jorritsma and Júlia Komjáthy and Dieter Mitsche},
      year={2024},
      eprint={2404.02984},
      archivePrefix={arXiv},
      primaryClass={math.PR},
      url={https://arxiv.org/abs/2404.02984}, 
}

@misc{gracar2023finitenesspercolationthresholdinhomogeneous,
      title={Finiteness of the percolation threshold for inhomogeneous long-range models in one dimension}, 
      author={Peter Gracar and Lukas Lüchtrath and Christian Mönch},
      year={2023},
      eprint={2203.11966},
      archivePrefix={arXiv},
      primaryClass={math.PR},
      url={https://arxiv.org/abs/2203.11966}, 
}

@misc{ding2023uniquenesscriticallongrangepercolation,
      title={Uniqueness of the critical long-range percolation metrics}, 
      author={Jian Ding and Zherui Fan and Lu-Jing Huang},
      year={2023},
      eprint={2308.00621},
      archivePrefix={arXiv},
      primaryClass={math.PR},
      url={https://arxiv.org/abs/2308.00621}, 
}

@article{jorritsma_cluster-size_2024,
	title = {Cluster-size decay in supercritical long-range percolation},
	volume = {29},
	issn = {1083-6489, 1083-6489},
	doi = {10.1214/24-EJP1135},
	pages = {Paper No. 82, 36 pp.},
	journaltitle = {Electronic Journal of Probability},
    shortjournal = {Electron. J. Probab.},
	author = {Jorritsma, Joost and Komjáthy, Júlia and Mitsche, Dieter},
	urldate = {2024-08-09},
	date = {2024-01},
	keywords = {60C05, 60K35, cluster-size distribution, Long-range percolation, second-largest component, spatial random graphs},
	file = {Full Text PDF:C\:\\Users\\Dr. Christian Mönch\\Zotero\\storage\\BWEAGAYW\\Jorritsma et al. - 2024 - Cluster-size decay in supercritical long-range per.pdf:application/pdf},
}

@article{gracar_age-dependent_2019,
	title = {The age-dependent random connection model},
	volume = {93},
	issn = {0257-0130},
	url = {https://mathscinet.ams.org/mathscinet-getitem?mr=4032928},
	doi = {10.1007/s11134-019-09625-y},
	pages = {309--331},
	number = {3},
	journaltitle = {Queueing Systems. Theory and Applications},
	shortjournal = {Queueing Syst.},
	author = {Gracar, Peter and Grauer, Arne and Lüchtrath, Lukas and Mörters, Peter},
	urldate = {2021-07-07},
	date = {2019},
	mrnumber = {4032928},
	file = {Eingereichte Version:C\:\\Users\\Dr. Christian Mönch\\Zotero\\storage\\VT5QQD72\\Gracar et al. - 2019 - The age-dependent random connection model.pdf:application/pdf;MathSciNet Snapshot:C\:\\Users\\Dr. Christian Mönch\\Zotero\\storage\\7MJHKFYE\\publdoc.html:text/html;MathSciNet Snapshot:C\:\\Users\\Dr. Christian Mönch\\Zotero\\storage\\2TG2H7U2\\publdoc.html:text/html},
}

@article{aizenman_discontinuity_1986,
	title = {Discontinuity of the percolation density in one-dimensional $1/|x- y|^2$ percolation models},
	volume = {107},
	issn = {0010-3616,1432-0916},
	pages = {611--647},
	number = {4},
	journaltitle = {Communications in Mathematical Physics},
	shortjournal = {Comm. Math. Phys.},
	author = {Aizenman, M. and Newman, C. M.},
	date = {1986},
	mrnumber = {868738},
	file = {Snapshot:C\:\\Users\\Dr. Christian Mönch\\Zotero\\storage\\ECG3XJV3\\article.html:text/html},
}

@article{bringmann_geometric_2019,
	title = {Geometric inhomogeneous random graphs},
	volume = {760},
	issn = {0304-3975,1879-2294},
	doi = {10.1016/j.tcs.2018.08.014},
	pages = {35--54},
	journaltitle = {Theoretical Computer Science},
	shortjournal = {Theoret. Comput. Sci.},
	author = {Bringmann, Karl and Keusch, Ralph and Lengler, Johannes},
	date = {2019},
	mrnumber = {3913223},
	file = {Eingereichte Version:C\:\\Users\\Dr. Christian Mönch\\Zotero\\storage\\BS3BXW8J\\Bringmann et al. - 2019 - Geometric inhomogeneous random graphs.pdf:application/pdf;Snapshot:C\:\\Users\\Dr. Christian Mönch\\Zotero\\storage\\BRQAL2VD\\article.html:text/html;Snapshot:C\:\\Users\\Dr. Christian Mönch\\Zotero\\storage\\Z3C9BZSS\\article.html:text/html},
}

@article{burton_density_1989,
	title = {Density and uniqueness in percolation},
	volume = {121},
	issn = {0010-3616,1432-0916},
	pages = {501--505},
	number = {3},
	journaltitle = {Communications in Mathematical Physics},
	shortjournal = {Comm. Math. Phys.},
	author = {Burton, R. M. and Keane, M.},
	date = {1989},
	mrnumber = {990777},
	file = {Snapshot:C\:\\Users\\Dr. Christian Mönch\\Zotero\\storage\\8MVSHLZN\\article.html:text/html;Snapshot:C\:\\Users\\Dr. Christian Mönch\\Zotero\\storage\\4DD7VB5Z\\article.html:text/html},
}

@article{deijfen_scale-free_2013,
	title = {Scale-free percolation},
	volume = {49},
	issn = {0246-0203,1778-7017},
	doi = {10.1214/12-AIHP480},
	pages = {817--838},
	number = {3},
	journaltitle = {Annales de l'Institut Henri Poincaré Probabilités et Statistiques},
	shortjournal = {Ann. Inst. Henri Poincaré Probab. Stat.},
	author = {Deijfen, Maria and van der Hofstad, Remco and Hooghiemstra, Gerard},
	date = {2013},
	mrnumber = {3112435},
	file = {Snapshot:C\:\\Users\\Dr. Christian Mönch\\Zotero\\storage\\GH3VMDMY\\article.html:text/html;Snapshot:C\:\\Users\\Dr. Christian Mönch\\Zotero\\storage\\UDRBA4ZG\\article.html:text/html;Volltext:C\:\\Users\\Dr. Christian Mönch\\Zotero\\storage\\LUURPK9E\\Deijfen et al. - 2013 - Scale-free percolation.pdf:application/pdf},
}

@article{gracar_chemical_2022,
	title = {Chemical distance in geometric random graphs with long edges and scale-free degree distribution},
	volume = {395},
	issn = {0010-3616,1432-0916},
	doi = {10.1007/s00220-022-04445-3},
	pages = {859--906},
	number = {2},
	journaltitle = {Communications in Mathematical Physics},
	shortjournal = {Comm. Math. Phys.},
	author = {Gracar, Peter and Grauer, Arne and Mörters, Peter},
	date = {2022},
	mrnumber = {4487527},
	file = {Snapshot:C\:\\Users\\Dr. Christian Mönch\\Zotero\\storage\\IM4HDLC7\\article.html:text/html;Snapshot:C\:\\Users\\Dr. Christian Mönch\\Zotero\\storage\\48A2KSNM\\article.html:text/html;Volltext:C\:\\Users\\Dr. Christian Mönch\\Zotero\\storage\\MQBJLNIQ\\Gracar et al. - 2022 - Chemical distance in geometric random graphs with .pdf:application/pdf},
}

@article{gracar_recurrence_2022,
	title = {Recurrence versus transience for weight-dependent random connection models},
	volume = {27},
	issn = {1083-6489},
	doi = {10.1214/22-ejp748},
	pages = {Paper No. 60, 31 pp.},
	journaltitle = {Electronic Journal of Probability},
	shortjournal = {Electron. J. Probab.},
	author = {Gracar, Peter and Heydenreich, Markus and Mönch, Christian and Mörters, Peter},
	date = {2022},
	mrnumber = {4417198},
	keywords = {Boolean model, preferential attachment, random-connection model, recurrence, Scale-free percolation, transience},
	file = {Snapshot:C\:\\Users\\Dr. Christian Mönch\\Zotero\\storage\\EKWJMMI8\\article.html:text/html;Snapshot:C\:\\Users\\Dr. Christian Mönch\\Zotero\\storage\\EMRHB99Z\\article.html:text/html;Volltext:C\:\\Users\\Dr. Christian Mönch\\Zotero\\storage\\WPMUXY5T\\Gracar et al. - 2022 - Recurrence versus transience for weight-dependent .pdf:application/pdf},
}

@article{gracar_percolation_2021,
	title = {Percolation phase transition in weight-dependent random connection models},
	volume = {53},
	issn = {0001-8678,1475-6064},
	doi = {10.1017/apr.2021.13},
	pages = {1090--1114},
	number = {4},
	journaltitle = {Advances in Applied Probability},
	shortjournal = {Adv. in Appl. Probab.},
	author = {Gracar, Peter and Lüchtrath, Lukas and Mörters, Peter},
	date = {2021},
	mrnumber = {4342578},
	file = {Akzeptierte Version:C\:\\Users\\Dr. Christian Mönch\\Zotero\\storage\\QSU4YDSD\\Gracar et al. - 2021 - Percolation phase transition in weight-dependent r.pdf:application/pdf;Snapshot:C\:\\Users\\Dr. Christian Mönch\\Zotero\\storage\\B5VRMMNQ\\article.html:text/html;Snapshot:C\:\\Users\\Dr. Christian Mönch\\Zotero\\storage\\5GA4K7AN\\article.html:text/html},
}

@article{newman_one-dimensional_1986,
	title = {One-dimensional $1/|j-i|^s$ percolation models: the existence of a transition for $s \leq 2$},
	volume = {104},
	issn = {0010-3616,1432-0916},
	shorttitle = {One-dimensional 1/{\textbar}j-i{\textbar}{\textasciicircum}s percolation models},
	pages = {547--571},
	number = {4},
	journaltitle = {Communications in Mathematical Physics},
	shortjournal = {Comm. Math. Phys.},
	author = {Newman, C. M. and Schulman, L. S.},
	date = {1986},
	mrnumber = {841669},
	file = {Snapshot:C\:\\Users\\Dr. Christian Mönch\\Zotero\\storage\\ZBQU3IRR\\article.html:text/html},
}

@article{schulman_long_1983,
	title = {Long range percolation in one dimension},
	volume = {16},
	issn = {0305-4470,1751-8121},
	pages = {L639--L641},
	number = {17},
	journaltitle = {Journal of Physics. A. Mathematical and General},
	shortjournal = {J. Phys. A},
	author = {Schulman, L. S.},
	date = {1983},
	mrnumber = {723249},
	file = {Snapshot:C\:\\Users\\Dr. Christian Mönch\\Zotero\\storage\\34Y5AMU5\\article.html:text/html;Snapshot:C\:\\Users\\Dr. Christian Mönch\\Zotero\\storage\\UYCJDTRV\\article.html:text/html},
}

@article{krioukov_hyperbolic_2010,
	title = {Hyperbolic geometry of complex networks},
	volume = {82},
	issn = {1539-3755,1550-2376},
	doi = {10.1103/PhysRevE.82.036106},
	pages = {036106, 18},
	number = {3},
	journaltitle = {Physical Review E. Statistical, Nonlinear, and Soft Matter Physics},
	shortjournal = {Phys. Rev. E (3)},
	author = {Krioukov, Dmitri and Papadopoulos, Fragkiskos and Kitsak, Maksim and Vahdat, Amin and Boguñá, Marián},
	date = {2010},
	mrnumber = {2787998},
	file = {Eingereichte Version:C\:\\Users\\Dr. Christian Mönch\\Zotero\\storage\\VMLC8E72\\Krioukov et al. - 2010 - Hyperbolic geometry of complex networks.pdf:application/pdf;Snapshot:C\:\\Users\\Dr. Christian Mönch\\Zotero\\storage\\SQ4CRGX4\\article.html:text/html;Snapshot:C\:\\Users\\Dr. Christian Mönch\\Zotero\\storage\\N7Z8MGYI\\article.html:text/html},
}

@article{aizenman_sharpness_1987,
	title = {Sharpness of the phase transition in percolation models},
	volume = {108},
	issn = {0010-3616,1432-0916},
	pages = {489--526},
	number = {3},
	journaltitle = {Communications in Mathematical Physics},
	shortjournal = {Comm. Math. Phys.},
	author = {Aizenman, Michael and Barsky, David J.},
	date = {1987},
	mrnumber = {874906},
	file = {Snapshot:C\:\\Users\\Dr. Christian Mönch\\Zotero\\storage\\XSQPU3FU\\article.html:text/html},
}

@article{baumler_recurrence_2023,
	title = {Recurrence and transience of symmetric random walks with long-range jumps},
	volume = {28},
	issn = {1083-6489},
	doi = {10.1214/23-ejp998},
	pages = {Paper No. 106, 24 pp.},
	journaltitle = {Electronic Journal of Probability},
	shortjournal = {Electron. J. Probab.},
	author = {Bäumler, Johannes},
	date = {2023},
	mrnumber = {4632146},
	file = {Snapshot:C\:\\Users\\Dr. Christian Mönch\\Zotero\\storage\\SCJMLW2Y\\article.html:text/html;Snapshot:C\:\\Users\\Dr. Christian Mönch\\Zotero\\storage\\5MV2VUIJ\\article.html:text/html;Volltext:C\:\\Users\\Dr. Christian Mönch\\Zotero\\storage\\LMVQFYVL\\Bäumler - 2023 - Recurrence and transience of symmetric random walk.pdf:application/pdf},
}

@book{doyle_random_1984,
	title = {Random walks and electric networks},
	volume = {22},
	isbn = {978-0-88385-024-4},
	series = {Carus Mathematical Monographs},
	pagetotal = {xiv+159},
	publisher = {Mathematical Association of America, Washington, {DC}},
	author = {Doyle, Peter G. and Snell, J. Laurie},
	date = {1984},
	mrnumber = {920811},
	file = {Snapshot:C\:\\Users\\Dr. Christian Mönch\\Zotero\\storage\\NCFIREWK\\article.html:text/html},
}

@book{lyons_probability_2016,
	title = {Probability on trees and networks},
	volume = {42},
	isbn = {978-1-107-16015-6},
	series = {Cambridge Series in Statistical and Probabilistic Mathematics},
	pagetotal = {xv+699},
	publisher = {Cambridge University Press, New York},
	author = {Lyons, Russell and Peres, Yuval},
	date = {2016},
	mrnumber = {3616205},
	doi = {10.1017/9781316672815},
	file = {Eingereichte Version:C\:\\Users\\Dr. Christian Mönch\\Zotero\\storage\\C2HZ9K33\\Lyons und Peres - 2016 - Probability on trees and networks.pdf:application/pdf;Snapshot:C\:\\Users\\Dr. Christian Mönch\\Zotero\\storage\\67LYREH4\\article.html:text/html},
}

@article{imbrie_intermediate_1988,
	title = {An intermediate phase with slow decay of correlations in one-dimensional $1/|x- y|^2$ percolation, Ising and Potts models},
	volume = {118},
	issn = {0010-3616,1432-0916},
	pages = {303--336},
	number = {2},
	journaltitle = {Communications in Mathematical Physics},
	shortjournal = {Comm. Math. Phys.},
	author = {Imbrie, J. Z. and Newman, C. M.},
	date = {1988},
	mrnumber = {956170},
	file = {Snapshot:C\:\\Users\\Dr. Christian Mönch\\Zotero\\storage\\KFCI88L3\\article.html:text/html},
}

@article{aizenman_uniqueness_1987,
	title = {Uniqueness of the infinite cluster and continuity of connectivity functions for short and long range percolation},
	volume = {111},
	issn = {0010-3616,1432-0916},
	pages = {505--531},
	number = {4},
	journaltitle = {Communications in Mathematical Physics},
	shortjournal = {Comm. Math. Phys.},
	author = {Aizenman, M. and Kesten, H. and Newman, C. M.},
	date = {1987},
	mrnumber = {901151},
	file = {Snapshot:C\:\\Users\\Dr. Christian Mönch\\Zotero\\storage\\R89VSU76\\article.html:text/html},
}

@article{newman_critical_1986,
	title = {Some critical exponent inequalities for percolation},
	volume = {45},
	issn = {0022-4715,1572-9613},
	doi = {10.1007/BF01021076},
	pages = {359--368},
	number = {3},
	journaltitle = {Journal of Statistical Physics},
	shortjournal = {J. Statist. Phys.},
	author = {Newman, C. M.},
	date = {1986},
	mrnumber = {869320},
	file = {Snapshot:C\:\\Users\\Dr. Christian Mönch\\Zotero\\storage\\ESAURAFP\\article.html:text/html},
}

@article{baumler_isoperimetric_2024,
	title = {Isoperimetric lower bounds for critical exponents for long-range percolation},
	volume = {60},
	issn = {0246-0203,1778-7017},
	doi = {10.1214/22-aihp1342},
	pages = {721--730},
	number = {1},
	journaltitle = {Annales de l'Institut Henri Poincaré Probabilités et Statistiques},
	shortjournal = {Ann. Inst. Henri Poincaré Probab. Stat.},
	author = {Bäumler, Johannes and Berger, Noam},
	date = {2024},
	mrnumber = {4718396},
	file = {Eingereichte Version:C\:\\Users\\Dr. Christian Mönch\\Zotero\\storage\\JB9LVY57\\Bäumler und Berger - 2024 - Isoperimetric lower bounds for critical exponents .pdf:application/pdf;Snapshot:C\:\\Users\\Dr. Christian Mönch\\Zotero\\storage\\ZQB6BFGR\\article.html:text/html},
}

@article{benjamini_diameter_2001,
	title = {The diameter of long-range percolation clusters on finite cycles},
	volume = {19},
	issn = {1042-9832,1098-2418},
	doi = {10.1002/rsa.1022},
	pages = {102--111},
	number = {2},
	journaltitle = {Random Structures \& Algorithms},
	shortjournal = {Random Structures Algorithms},
	author = {Benjamini, Itai and Berger, Noam},
	date = {2001},
	mrnumber = {1848786},
	file = {Eingereichte Version:C\:\\Users\\Dr. Christian Mönch\\Zotero\\storage\\WY2955Z2\\Benjamini und Berger - 2001 - The diameter of long-range percolation clusters on.pdf:application/pdf;Snapshot:C\:\\Users\\Dr. Christian Mönch\\Zotero\\storage\\8B4B7JLA\\article.html:text/html},
}

@article{berger_transience_2002,
	title = {Transience, recurrence and critical behavior for long-range percolation},
	volume = {226},
	issn = {0010-3616,1432-0916},
	url = {https://arxiv.org/abs/math/0110296v3},
	doi = {10.1007/s002200200617},
	pages = {531--558},
	number = {3},
	journaltitle = {Communications in Mathematical Physics},
	shortjournal = {Comm. Math. Phys.},
	author = {Berger, Noam},
	date = {2002},
	mrnumber = {1896880},
	addendum = {Corrected version: {arXiv}:math/0110296v3},
	file = {Eingereichte Version:C\:\\Users\\Dr. Christian Mönch\\Zotero\\storage\\B3MBHA4W\\Berger - 2002 - Transience, recurrence and critical behavior for l.pdf:application/pdf},
}

@misc{berger_lower_2004,
      title={A lower bound for the chemical distance in sparse long-range percolation models}, 
      author={Noam Berger},
      year={2004},
      eprint={math/0409021},
      archivePrefix={arXiv},
      primaryClass={math.PR},
      url={https://arxiv.org/abs/math/0409021}, 
}

@article{biskup_quenched_2021,
	title = {Quenched invariance principle for a class of random conductance models with long-range jumps},
	volume = {180},
	issn = {0178-8051,1432-2064},
	doi = {10.1007/s00440-021-01059-z},
	pages = {847--889},
	number = {3},
	journaltitle = {Probability Theory and Related Fields},
	shortjournal = {Probab. Theory Related Fields},
	author = {Biskup, Marek and Chen, Xin and Kumagai, Takashi and Wang, Jian},
	date = {2021},
	mrnumber = {4288333},
	file = {Snapshot:C\:\\Users\\Dr. Christian Mönch\\Zotero\\storage\\GRL92ULG\\article.html:text/html;Volltext:C\:\\Users\\Dr. Christian Mönch\\Zotero\\storage\\7DEG6H2N\\Biskup et al. - 2021 - Quenched invariance principle for a class of rando.pdf:application/pdf},
}

@article{biskup_scaling_2004,
	title = {On the scaling of the chemical distance in long-range percolation models},
	volume = {32},
	issn = {0091-1798,2168-894X},
	doi = {10.1214/009117904000000577},
	pages = {2938--2977},
	number = {4},
	journaltitle = {The Annals of Probability},
	shortjournal = {Ann. Probab.},
	author = {Biskup, Marek},
	date = {2004},
	mrnumber = {2094435},
	file = {Snapshot:C\:\\Users\\Dr. Christian Mönch\\Zotero\\storage\\G6PDI793\\article.html:text/html;Volltext:C\:\\Users\\Dr. Christian Mönch\\Zotero\\storage\\DSFS35GE\\Biskup - 2004 - On the scaling of the chemical distance in long-ra.pdf:application/pdf},
}

@misc{jacob2024longedgeserasesubcritical,
      title={Only long edges can erase the subcritical annulus-crossing phase of weight-dependent random connection models}, 
      author={Emmanuel Jacob},
      year={2024},
      eprint={2311.04023},
      archivePrefix={arXiv},
      primaryClass={math.PR},
      url={https://arxiv.org/abs/2311.04023}, 
}

@article{hutchcroft_sharp_2022,
	title = {Sharp hierarchical upper bounds on the critical two-point function for long-range percolation on $\mathbb{Z}^d$},
	volume = {63},
	issn = {0022-2488,1089-7658},
	doi = {10.1063/5.0088450},
	pages = {Paper No. 113301, 18},
	number = {11},
	journaltitle = {Journal of Mathematical Physics},
	shortjournal = {J. Math. Phys.},
	author = {Hutchcroft, Tom},
	date = {2022},
	mrnumber = {4504407},
	file = {Eingereichte Version:C\:\\Users\\Dr. Christian Mönch\\Zotero\\storage\\TJ8VBUDN\\Hutchcroft - 2022 - Sharp hierarchical upper bounds on the critical tw.pdf:application/pdf;Snapshot:C\:\\Users\\Dr. Christian Mönch\\Zotero\\storage\\EH7MSLWC\\article.html:text/html},
}

@article{coppersmith_diameter_2002,
	title = {The diameter of a long-range percolation graph},
	volume = {21},
	issn = {1042-9832,1098-2418},
	doi = {10.1002/rsa.10042},
	pages = {1--13},
	number = {1},
	journaltitle = {Random Structures \& Algorithms},
	shortjournal = {Random Structures Algorithms},
	author = {Coppersmith, Don and Gamarnik, David and Sviridenko, Maxim},
	date = {2002},
	mrnumber = {1913075},
	file = {Snapshot:C\:\\Users\\Dr. Christian Mönch\\Zotero\\storage\\YT4T8BZI\\article.html:text/html},
}

\end{document}